\documentclass{amsart}
\usepackage[dvips]{color}
\usepackage{graphicx}

\usepackage{latexsym}
\usepackage{amssymb}
\usepackage{amsmath}
\usepackage{amsthm}
\usepackage{amscd}
\theoremstyle{plain}

\newtheorem{thm}{Theorem}
\newtheorem{lem}[thm]{Lemma}
\newtheorem{defin}[thm]{Definition}
\newtheorem{prop}[thm]{Proposition}

\theoremstyle{definition}
\newtheorem{rem}[thm]{Remark}

\newtheorem{ques}{Question}

\newtheorem{cor}[thm]{Corollary}
\sloppy
\begin{document}
\title{Arithmetic progressions in middle $\frac 1 N^{\text{th}}$ Cantor sets}

\author{Jon Chaika}
\email{chaika@math.utah.edu}
\address{Department of Mathematics, University of Utah, 155 S 1400 E Room 233, Salt Lake City, UT 84112}

\maketitle
First to fix some notation. Let $X\subset [0,1]$ be the middle $\frac 1 N^{\text{th}}$ Cantor set. That is $X =\cap_{k=1}^\infty C_k$ where $C_0=[0,1]$ and $C_{k+1}$ is obtained by removing the middle $\frac 1 N^{\text{th}}$ from each connected component of $C_k$.  Notice $C_k$ consists of $2^k$ intervals of size $(\frac{N-1}{2N})^k$. The gaps between these intervals have size at least $\frac{1}{N}(\frac{N-1}{2N})^{k-1}$. 
  Let $a_1,...,a_r$ be numbers and $X+a_r$ be considered modulo 1.  For $\delta>0$ let $X_{\delta}\supset X$ be the set obtained by deleting the middle $N^{th}$ of size at least $\delta$. This is a finite union of intervals.
 \begin{thm}For any $a_1,....,a_{\frac{N}{100\log_2(N)}}$ we have that $\cap_{i=1}^{\frac{N}{100\log_2(N)}} X+a_i \neq \emptyset$.
 \end{thm}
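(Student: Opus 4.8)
The plan is to pass to the finite stages $C_k$ and run a self‑similar bootstrap whose natural block length is $j:=\lceil\log_2 N\rceil$.

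First I would reduce to showing that $\bigcap_{i=1}^{r}(C_k+a_i)\neq\emptyset$ for every $k$ (equivalently $\bigcap_i(X_\delta+a_i)\neq\emptyset$ for every $\delta>0$), which is immediate by compactness, since the sets $\bigcap_i(C_k+a_i)$ are compact and decrease to $\bigcap_i(X+a_i)$. The one quantitative input is that $C_j$ is already almost all of $[0,1]$: it is $2^j$ intervals of total length $(\tfrac{N-1}{N})^j\ge 1-\tfrac{j}{N}$, so on the circle each translate $C_j+a_i$ misses a set of measure $\le j/N$, whence $\bigl|\bigcap_{i=1}^{r}(C_j+a_i)\bigr|\ge 1-\tfrac{rj}{N}\ge 1-\tfrac{1}{50}$ by the hypothesis $r\le N/(100\log_2 N)$ together with $j\le 2\log_2 N$. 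Thus at scale $j$ the intersection already fills at least $98\%$ of the circle, while every gap created so far has size at most $g_1=1/N$, comparable to the length $\ell_j\approx 1/N$ of a level‑$j$ interval.

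The heart of the argument is an inductive step carrying a decreasing family $F_0\supseteq F_1\supseteq\cdots$, each $F_m$ a finite union of intervals with $F_m\subseteq\bigcap_{i=1}^{r}(C_{mj}+a_i)$, and passing from $F_m$ to $F_{m+1}$ locally: inside each level‑$mj$ interval of each $C_{mj}+a_i$ that meets $F_m$ one rescales by $\ell_{mj}^{-1}$, so that the local picture becomes $\bigcap_i(C_j+b_i)$ for induced shifts $b_i$, to which the $98\%$ estimate applies, and then keeps the surviving pieces. If $|F_m|$ stays bounded away from $0$ then $\bigcap_m F_m\neq\emptyset$, and any point of it lies in every $X+a_i$.

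I expect the main obstacle to be exactly the control of $|F_m|$. A component of $F_m$ could sit almost entirely inside a gap that is produced only during block $m+1$, so the crude bound on the mass of $F_m$ removed in that block is only the absolute constant $rj/N$ per block, and these sum to about $r\ge 1$, giving nothing. To obtain a multiplicative estimate $|F_{m+1}|\ge(1-\tfrac{1}{50})|F_m|$ one has to propagate an extra regularity of $F_m$ — that inside every level‑$mj$ interval of every $C_{mj}+a_i$ the set $F_m$ occupies a fixed proportion and is not concentrated on the positions of the gaps to come — discarding the thin, badly placed pieces at each block and using the large slack in the $98\%$ estimate to show that little is thrown away. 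Making this regularity self‑reproducing is where the hypothesis $r\le N/(100\log_2 N)$ is really consumed: the factor $\log_2 N$ is the number of scales in one block, and the factor $100$ supplies the room in the measure estimate that the discarding step requires.
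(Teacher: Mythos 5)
You have correctly handled the soft part (the compactness reduction to the finite stages, and the first measure estimate $|\bigcap_i (C_j+a_i)|\ge 1-rj/N\ge 0.98$), but the heart of the proof is missing, and you say so yourself: the inductive step from $F_m$ to $F_{m+1}$ is only described as something one ``has to'' make work by propagating an unspecified regularity. That regularity is precisely the theorem's difficulty, not a technicality. A purely measure-theoretic invariant cannot be iterated as stated: $|\bigcap_i(C_{mj}+a_i)|\le (1-\tfrac1N)^{mj}\to 0$, so the $98\%$ estimate is only a statement about the first block of scales, and the loss in block $m+1$, while at most $rj/N$ relative to the ambient interval, can be $100\%$ relative to $F_m$ if $F_m$ sits where the new gaps appear -- exactly the scenario you flag. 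You neither formulate a self-reproducing local density condition nor prove that it reproduces, so the proposal is a plan with the key lemma absent. (There is also a smaller inaccuracy in the plan itself: inside a level-$mj$ interval of one translate the other translates do \emph{not} rescale to full copies $C_j+b_i$, since their level-$mj$ intervals are misaligned and contribute two partial copies separated by a gap of relative size about $\tfrac{2}{N-1}$; any local renormalization has to account for this.)

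What replaces your missing regularity in the paper is a \emph{counting} invariant rather than a density one: call an interval $J$ of length $N^{-k}$ good if $J\cap\bigcap_i(X_{N^{-(k+1)}}+a_i)$ contains at least $N/2$ disjoint intervals of length $N^{-(k+1)}$. The inductive step then needs only an upper bound on how many of the $N^2/2$ next-scale intervals (organized into $N/2$ blocks of $N$ consecutive ones) can be damaged when each translate is refined by one factor of $N$; this is proved by a lemma counting how many deleted intervals of a given generation can meet a fixed interval, giving roughly $N\log_2 N$ damaged intervals per translate, hence $O(N^2/\log_2 N\cdot\log_2 N)$ with the stated constant after multiplying by $r=N/(100\log_2 N)$, and a pigeonhole over the $N/2$ blocks produces a good subinterval at the next scale. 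If you want to salvage your measure/density bootstrap, you would have to prove a statement of exactly this strength anyway (uniform lower density of $F_m$ in every surviving interval of every translate, stable under one block of refinement); the counting formulation is what makes the self-reproduction provable, and it is where the hypothesis $r\le N/(100\log_2 N)$ is actually spent.
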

 That is, the middle $\frac 1 N^{th}$ cantor set contains arithmetic progressions and in fact more general configurations of length proportional to $\frac{N}{\log(N)}$.
 
 Broderick, Fishman and Simmons have subsequently proved this statement using variants of Schmidt's game \cite[Theorem 2.1]{BFS}.
 
 \begin{defin} We say an interval $J$ of length $\frac{1}{N^k}$ is k-\emph{good} if 
 $$J  \cap_{i=1}^{\frac{N}{100\log_2(N)}} X_{\frac 1 {N^{k+1}}}+a_i$$ contains $\frac N 2$ disjoint intervals of size $\frac 1 {N^{k+1}}$.
 \end{defin}
 We prove the Theorem by induction using the following Proposition:
 \begin{prop} If $J$ is $k$-good then it contains a subinterval $J'$ which is $k+1$-good.
 \end{prop}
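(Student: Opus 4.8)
The Proposition drives an induction on $k$, whose base case is that $[0,1]$ is $0$-good: $X_{1/N}+a_i$ is the circle with a single arc of length $\frac1N$ removed, and deleting $\frac{N}{100\log_2N}$ such arcs leaves more than $\frac N2$ of the $N$ consecutive length-$\frac1N$ arcs entirely inside the intersection. For the inductive step the plan is to take $J'$ to be one of the $\frac N2$ disjoint intervals $I_1,\dots,I_{N/2}$ of length $\frac1{N^{k+1}}$ that the $k$-goodness of $J$ supplies inside $J\cap\bigcap_i(X_{1/N^{k+1}}+a_i)$, \emph{choosing it by a first-moment argument rather than a union bound}. Write $\rho=\frac{N-1}{2N}$, and for each shift index $i$ and each $j$ set
\[
g_{ij}=\bigl|\,I_j\setminus(X_{1/N^{k+2}}+a_i)\,\bigr|.
\]
Since $I_j\subset X_{1/N^{k+1}}+a_i$, this is exactly the measure of the part of $I_j$ swept out by the ``new'' middle $N^{\text{th}}$s removed in passing from $X_{1/N^{k+1}}$ to $X_{1/N^{k+2}}$, all of which lie in the single component of $X_{1/N^{k+1}}+a_i$ containing $I_j$. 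The steps are: (i) bound $\sum_i\sum_j g_{ij}$; (ii) fix $j_0$ minimizing $\sum_i g_{i,j_0}$; (iii) show $J':=I_{j_0}$ is $(k+1)$-good.

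Step (i) rests on one structural fact: each component $K$ of $X_{1/N^{k+1}}$ has length $<\frac1{N^k}$, and $|K\setminus X_{1/N^{k+2}}|\le\frac{\log_2N+1}{N}\,|K|$. The first part is immediate from the definition of $X_\delta$. For the second, $K$ is a Cantor-construction interval and producing $X_{1/N^{k+2}}$ inside it means running the remaining deletion rounds whose removed middles have length $\ge\frac1{N^{k+2}}$; since the component lengths of $X_{1/N^{k+1}}$ and of $X_{1/N^{k+2}}$ are comparable to $\frac1{N^k}$ and $\frac1{N^{k+1}}$ and $\rho<\frac12$, there are fewer than $\log_2N+1$ such rounds, and each retains a $(1-\frac1N)$-fraction, so Bernoulli's inequality finishes it. Because $J$ has length $\frac1{N^k}$ and the components of $X_{1/N^{k+1}}$ have length in $[\rho\frac1{N^k},\frac1{N^k})$, only boundedly many of them meet $J$, so for each $i$ the new middles of $X_{1/N^{k+2}}+a_i$ occupy measure at most a small constant times $\frac{\log_2N}{N}\cdot\frac1{N^k}$ inside $J$; since the $I_j$ are disjoint subsets of $J$, $\sum_j g_{ij}$ is at most that. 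Summing over the $\frac{N}{100\log_2N}$ shifts and then averaging over the $\frac N2$ indices $j$ gives a $j_0$ with $\sum_i g_{i,j_0}\le\frac18\cdot\frac1{N^{k+1}}=\frac18|I_{j_0}|$; the factor $100$ in the hypothesis leaves plenty of room here.

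For step (iii), partition $J'=I_{j_0}$ into $N$ consecutive cells $Q_1,\dots,Q_N$ of length $\frac1{N^{k+2}}$; it suffices that $\ge\frac N2$ of them lie in $\bigcap_i(X_{1/N^{k+2}}+a_i)$, as those are then $\frac N2$ disjoint intervals of size $\frac1{N^{k+2}}$ inside $J'\cap\bigcap_i(X_{1/N^{k+2}}+a_i)$. A cell can fail only by meeting a new middle of some $X_{1/N^{k+2}}+a_i$, and the decisive point is that every such middle has length $\ge\frac1{N^{k+2}}=|Q_m|$. Hence for a fixed $i$ at most $N^{k+2}g_{i,j_0}+2$ of its new middles meet $I_{j_0}$, each meeting at most $N^{k+2}(\text{its length})+2$ cells, so $a_i$ spoils at most $3N^{k+2}g_{i,j_0}+4$ cells; summing over $i$, the number of failed cells is at most $3N^{k+2}\sum_i g_{i,j_0}+\frac{4N}{100\log_2N}\le\frac38N+\frac{4N}{100\log_2N}<\frac N2$ (we may assume $N$ large, since otherwise $\lfloor\frac{N}{100\log_2N}\rfloor\le1$ and the Theorem is vacuous). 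So more than $\frac N2$ cells survive, $J'$ is $(k+1)$-good, and the Theorem follows by iterating the Proposition and intersecting the nested intervals $J\supset J'\supset\cdots$. (I suppress the routine bookkeeping from the mod-$1$ identification; after the first step every interval in play has length $\le\frac1N$, and locally all the sets involved are honest finite unions of intervals.)

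The obstacle, and the reason for the first-moment detour, is that the natural union bound fails: if one merely flags $I_j$ as bad for $i$ whenever it straddles a sizeable new gap, then each of the $\asymp N/\log N$ shifts can be bad for a positive fraction of the $I_j$, so a union bound rejects $\asymp N^2/\log N\gg N/2$ of them. Averaging survives this because a single shift removes only $O(\log N/N)$ of the length of $J$, hence on average only $O(\log N/N)$ of the length of each $I_j$, so all $\asymp N/\log N$ shifts together remove only a small absolute fraction (below $\frac18$) on average. The delicate remaining point is converting ``$I_{j_0}$ has small total new-gap measure'' into ``$I_{j_0}$ loses few of its $\frac1{N^{k+2}}$-cells'', which works precisely because the smallest gap that can occur at this scale is exactly one cell-length $\frac1{N^{k+2}}$: small gap measure then forces few gaps and little damage at once.
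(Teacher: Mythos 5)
Your proof is correct, and its skeleton is the paper's: take the $\frac N2$ intervals of length $\frac1{N^{k+1}}$ furnished by $k$-goodness, subdivide into cells of length $\frac1{N^{k+2}}$, bound how many cells are ruined in the passage from $X_{1/N^{k+1}}$ to $X_{1/N^{k+2}}$ over all $\frac{N}{100\log_2N}$ shifts, and locate a surviving block by averaging (your first-moment choice of $j_0$ is the same device as the paper's pigeonhole over the $\frac N2$ blocks, so the ``detour'' you describe is in fact the published argument's mechanism). Where you genuinely differ is in the damage estimate. The paper's engine is its Lemma and Corollary: a generation-by-generation count of how many removed middles can meet an interval of a given length, combined with the observation that $k$ intervals of total measure $c$ meet at most $2k+\delta^{-1}c$ disjoint cells of size $\delta$, yielding at most $9N\log_2N$ spoiled cells per shift. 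You bypass the Lemma and Corollary entirely and instead use two structural facts: each component of $X_{1/N^{k+1}}$ loses at most a $\frac{\log_2N+1}{N}$ fraction of its measure (at most $\log_2N+1$ rounds, each retaining a $1-\frac1N$ fraction, since $\frac{N-1}{2N}<\frac12$), and every newly removed middle has length at least $\frac1{N^{k+2}}$, i.e.\ one cell, so small new-gap measure inside $I_{j_0}$ simultaneously controls the number of new gaps and the number of cells they can touch. This buys a somewhat cleaner and more self-contained argument (no counting of removed intervals per generation, no sliding argument as in the paper's Lemma), at the cost of slightly less explicit per-generation information; the final numerology is comparable, roughly $\frac38N$ spoiled cells versus the paper's $\frac{9}{100}N$ per block on average. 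Two small caveats: your ``plenty of room'' at the $\frac18$ threshold is tighter than advertised with the crudest constants (with $\log_2N+1\le 2\log_2N$ one gets $\frac{12}{100}<\frac18$, which works but barely; for large $N$ it is comfortable, and as you note small $N$ makes the statement vacuous since $\lfloor\frac{N}{100\log_2N}\rfloor\le1$), and the base case you supply is not part of the Proposition, though it is of course needed to run the induction in the Theorem.
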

 Notice that by compactness if $J$ is a closed interval and  $$J  \cap_{i=1}^{\frac{N}{100\log_2(N)}} X_{\frac 1 {N^{k+1}}}+a_i\neq \emptyset$$ for all $k$ then $$J  \cap_{i=1}^{\frac{N}{100\log_2(N)}} X+a_i \neq \emptyset.$$
 \begin{lem} Let $L>k$. If $J$ is an interval of size $(\frac{N-1}{2N})^k$ and $I_1,...,I_{2^{L-1}}$ be the intervals removed from $C_{L-1}$ to obtain $C_L$. Then $|\{r:I_r\cap J \neq \emptyset\}|\leq  2^{L-k-1}.$
 \end{lem}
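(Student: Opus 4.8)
The plan is to carry out a short case analysis on how many components of $C_k$ the interval $J$ can meet; the only genuine idea is a reflection trick to handle the case when $J$ straddles two of them.

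First I would assemble the elementary facts. The set $C_k$ has $2^k$ components, each of length $\ell:=\left(\tfrac{N-1}{2N}\right)^k$; consecutive components are separated by a gap of length at least $\tfrac1N\left(\tfrac{N-1}{2N}\right)^{k-1}=\tfrac{2}{N-1}\ell$, in particular positive; and, because $L>k$ gives $C_{L-1}\subseteq C_k$, each $I_r$ is contained in a single component of $C_k$, with exactly $2^{L-1-k}=2^{L-k-1}$ of the $I_r$ inside any given component. Since $|J|=\ell$ is the length of one component, $J$ cannot meet three components $D<D'<D''$ of $C_k$: it would contain a point of $D$ and a point of $D''$, hence the whole intervening component $D'$ strictly between them, forcing $|J|>\ell$. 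So $J$ meets at most two components of $C_k$, and if exactly two then they are consecutive. If $J$ meets only one component $D$, then every $I_r$ meeting $J$ lies in $D$, so the count is at most $2^{L-k-1}$ and we are done.

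The main case is $J$ meeting two consecutive components $D_1<D_2$ of $C_k$, with separating gap of length $g\ge\tfrac{2}{N-1}\ell>0$. Then $J\cap D_1$ is a subinterval of $D_1$ anchored at $D_1$'s right endpoint, of some length $p$, and $J\cap D_2$ is a subinterval of $D_2$ anchored at $D_2$'s left endpoint, of some length $q$; since $J$ contains the whole separating gap, $p+g+q\le|J|=\ell$, so $p+q\le\ell-g$. Every $I_r$ meeting $J$ lies in $D_1$ or in $D_2$. The Cantor construction inside any component of $C_k$ is an affine copy of the one on $[0,1]$ and hence symmetric under reflection through that component's centre; and $D_1,D_2$ are translates, with identical internal structure. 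Using these symmetries I would identify the number of $I_r\subseteq D_1$ meeting $J$ with the number of $I_r\subseteq D_1$ meeting a length-$p$ subinterval anchored at $D_1$'s \emph{left} endpoint, and the number of $I_r\subseteq D_2$ meeting $J$ with the number of $I_r\subseteq D_1$ meeting a length-$q$ subinterval anchored at $D_1$'s \emph{right} endpoint. These two subintervals of $D_1$ are disjoint, because $p+q\le\ell-g<\ell$, and the hole between them has length $\ell-p-q\ge g$, which exceeds the length $\tfrac{2}{N-1}\left(\tfrac{N-1}{2N}\right)^{L}$ of a single $I_r$; hence no $I_r\subseteq D_1$ meets both subintervals. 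Consequently the two families of $I_r$'s being counted are disjoint families, all contained in $D_1$, so the total number of $I_r$ meeting $J$ is at most the number $2^{L-k-1}$ of $I_r$'s inside $D_1$, which is the bound we want.

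The step I expect to be the crux is exactly this straddling case. The crude estimate that $J$ meets at most two components of $C_k$, each containing at most $2^{L-k-1}$ of the $I_r$, only yields $2^{L-k}$ --- a factor of two too weak --- and the reflection-and-disjointness argument is precisely what recovers that factor of two. The only things requiring care there are the bookkeeping with $p$, $q$, and $g$, and the one-line comparison of the hole length $g$ with the length $\tfrac{2}{N-1}\left(\tfrac{N-1}{2N}\right)^{L}$ of a single removed interval.
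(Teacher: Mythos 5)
Your proof is correct and follows essentially the same route as the paper: reduce to the case where $J$ straddles two consecutive components of $C_k$, then recover the factor of two from the identical internal structure of adjacent components together with the fact that the separating gap is longer than any single removed interval $I_r$. The only cosmetic difference is the bookkeeping device --- the paper matches each $I_r$ in the right component meeting $J$ with a translated copy in the left component that misses $J$ (a sliding injection), while you pull both windows back into one component via translation and the reflection symmetry and observe they are too far apart for any $I_r$ to be counted twice; the reflection step is harmless but not actually needed.
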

 \begin{proof}
 This is maximized if $J$ is a subinterval of $X_{\frac 1 N(\frac{N-1}{2N})^{k-1}}$. The estimate is achieved for those. To see that it is maximized for subintervals of $X_{\frac 1 N(\frac{N-1}{2N})^{k-1}}$ let us consider a $J$ with $|J|=(\frac{N-1}{2N})^k$ so that the intersections with $I_1,...,I_{2^{L-1}}$ are not contained in one subinterval of $X_{\frac 1 N(\frac{N-1}{2N})^{k-1}}$. So $J$ is contained in $U\cup G\cup V$ where $U$ and $V$ are subintervals of $X_{\frac 1 N(\frac{N-1}{2N})^{k-1}}$ and $G\subset ([0,1]\setminus X_{\frac 1 N(\frac{N-1}{2N})^{k-1}})$ is the gap of size at least $\frac 1 N(\frac{N-1}{2N})^{k-1}$ between them. We assume $U$ is on the left of $V$. First notice no $I_r$ is contained in $G$. Now if $I_r \cap J \cap V \neq \emptyset$ then $J=U+c$ where $c-|G|\geq c- \frac 1 N(\frac{N-1}{2N})^{k-1}\geq d(I_r,q)$ where $q$ is the left endpoint of $V$. Let $p$ be the left endpoint of $U$. There exist $I_L$ with $d(I_L,p)=d(I_r,q)$. Since $|I_L|<|G|$ it follows that $I_L\cap (U+c)=I_L\cap J=\emptyset$. So by sliding U any new intersection with an $I_j$ occurs only after a previous intersection with some $I_r$ has been lost. 
 \end{proof}
 
 \begin{cor} If $J$ is any interval of size $\frac{1}{N^k}$, and $I_1,...I_r$ are the intervals of length exactly $\frac{1}{N^{k}}\delta$ deleted to form $X_{\delta\frac 1 { N^{k}}}$ 
  then
 $$|\{j:I_j\cap J \neq \emptyset\}|\leq 3\cdot 2^{\log_{\frac{2N}{N-1}}\lceil \frac 1 \delta\rceil}.$$
  \end{cor}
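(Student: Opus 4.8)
The plan is to reduce to the Lemma by reconciling the two scales at play: the ``construction scale'' $\big(\tfrac{N-1}{2N}\big)^{j}$ used in the Lemma, and the scale $N^{-j}$ appearing in the statement. Let $L$ be the level at which the $I_j$ are created, so that the $I_j$ are precisely the $2^{L-1}$ middle gaps deleted in passing from $C_{L-1}$ to $C_L$; these have length $\tfrac1N\big(\tfrac{N-1}{2N}\big)^{L-1}$, so the hypothesis that they have length $\tfrac1{N^{k}}\delta$ records the identity $\big(\tfrac{N-1}{2N}\big)^{L-1}=\delta N^{1-k}$.

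Next I would let $k'$ be the largest integer with $k'\le L-1$ and $\big(\tfrac{N-1}{2N}\big)^{k'}\ge N^{-k}$ (such $k'$ exists, as $k'=0$ always qualifies once $L\ge1$). Then $k'<L$, and $J$, having length $N^{-k}\le\big(\tfrac{N-1}{2N}\big)^{k'}$, lies inside some interval $\tilde J$ of length exactly $\big(\tfrac{N-1}{2N}\big)^{k'}$; since enlarging $J$ only increases the count, the Lemma applied to $\tilde J$ (with $k'$ in place of $k$) gives $|\{j:I_j\cap J\ne\emptyset\}|\le 2^{\,L-k'-1}$. It then remains to show $2^{\,L-k'-1}\le 3\cdot 2^{\log_{\frac{2N}{N-1}}\lceil 1/\delta\rceil}$.

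If $k'=L-1$ the left side is $2^{0}=1$ and there is nothing to prove. Otherwise $k'+1\le L-1$, so the reason $k'+1$ fails the defining condition is that $\big(\tfrac{N-1}{2N}\big)^{k'+1}<N^{-k}$, i.e.\ $\big(\tfrac{N-1}{2N}\big)^{k'}<\tfrac{2N}{N-1}N^{-k}$. Dividing the identity $\big(\tfrac{N-1}{2N}\big)^{L-1}=\delta N^{1-k}$ by this bound yields $\big(\tfrac{N-1}{2N}\big)^{(L-1)-k'}>\tfrac{(N-1)\delta}{2}$, and taking logarithms in base $\tfrac{2N}{N-1}$ (which reverses the inequality, since $\tfrac{N-1}{2N}<1$) gives $(L-1)-k'<\log_{\frac{2N}{N-1}}\tfrac1\delta+\log_{\frac{2N}{N-1}}\tfrac{2}{N-1}$. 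For $N\ge3$ the second term is $\le0$, so $2^{(L-1)-k'}<2^{\log_{\frac{2N}{N-1}}\lceil 1/\delta\rceil}$; for $N=2$ it equals $\tfrac12$, contributing only an extra factor $\sqrt2$. In all cases the factor $3$ has room to spare.

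The argument is mostly bookkeeping; the one point that needs care is the clash between the two scaling systems together with the need to keep $k'$ strictly below $L$ so that the Lemma is applicable. This is why $k'$ is defined with the explicit cap $k'\le L-1$ — for $\delta$ not small the ``natural'' choice of $k'$ would overshoot $L$ — and the factor $3$ in the statement is deliberately generous, so as to absorb this truncation together with the rounding between the integers $k,k',L$ and the real number $\log_{\frac{2N}{N-1}}\lceil 1/\delta\rceil$.
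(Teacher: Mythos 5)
Your argument is correct and takes essentially the same route as the paper: reduce to the Lemma by reconciling the scale $N^{-k}$ of $J$ with the construction scale $\left(\tfrac{N-1}{2N}\right)^{j}$ at which the gaps $I_j$ live. The only difference is bookkeeping: the paper rounds the construction scale \emph{down} (taking $p=\lceil \log_{\frac{2N}{N-1}}N^k\rceil$) and covers $J$ by at most $3$ such intervals, which is where the factor $3$ comes from, whereas you round \emph{up} and enclose $J$ in a single construction-scale interval, absorbing the scale mismatch into the exponent — which in fact shows the factor $3$ is only slack.
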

  \begin{proof} Let $p=\lceil \log_{\frac{2N}{N-1}}N^k\rceil$. $J$ contains at most parts of 3 subintervals of size $(\frac{N-1}{2N})^p$. Since there are at most $\lceil \frac{1}\delta \rceil$ steps in the inductive process to form $X$ between deleting intervals of size $\frac 1 {N^k}$ and $\delta \frac 1 {N^k}$, The corollary follows by applying the lemma.
  \end{proof}
  \begin{proof}[Proof of Proposition] Consider the subintervals of $J  \cap_{i=1}^{\frac{N}{100\log_2(N)}} X_{\frac 1 {N^{k+1}}}+a_i$ of size $\frac{1}{N^{k+2}}$. By the assumption that $J$ is $k$-good we have at $\frac{N^2}{2}$ disjoint intervals organized into $\frac{N}{2}$ blocks of $N$ consecutive intervals. (We may have other intervals too.) From $X_{\frac{1}{N^{k+1}}}$ to $X_{\frac 1 {N^{k+2}}}$ we can delete portions of at most 
  \begin{multline*}
  3 \log_{\frac{2N}{N-1}}( N) 2^{\log_{\frac{2N}{N-1}}(N)}+3N\log_{\frac{2N}{N-1}} N\leq\\  3 \cdot 2^{(\log_2N)+1}\log_2N+3N\log_2 N\leq 9N\log_2 N
  \end{multline*}
   of them. This estimate follows because $k$ intervals of total measure $c$ can intersect at most $2k+\delta^{-1} c$ disjoint intervals of size $\delta$. There are at most $\log_{\frac{2N}{N-1}}N$ steps, and at each step we remove at most $3  \cdot 2^{\log_{\frac{2N}{N-1}}(N)}$ intervals with total measure at most $\frac 1 {N^k}$. 
  
  We do this for each $X+a_i$ and can delete portions of at most $\frac{N^2}{20}$ intervals of size $\frac 1 {N^{k+2}}$. So by the pigeon hole principle one of the $\frac{N}{2}$ blocks has at least half of its intervals. This is a $k+1$-good subinterval of $J$.
  \end{proof}
  
  \begin{rem} The techniques of this note are a little robust and imply the existence of configurations for bilipshitz images of the middle $\frac 1 N$ cantor set where the bilipshitz constant is not too large depending on $N$. It is natural to ask if there exists $N$ so that the image of the middle $\frac 1 N$ cantor set under any bilipshitz map contains 3 term arithmetic progressions.
  \end{rem}
  \begin{ques}Is the bound found in this note on the order of the correct one? Is it possible to find arithmetic progressions say of order $N$?
  \end{ques}
  \section{Acknowledgments}
  I thank V. Bergelson, L. Fishman and D. Simmons.  In particular D. Simmons saw that the length of AP for the middle $\frac 1 N$ cantor set went to infinity with $N$. This work was supported in part by NSF Grant DMS-1300550.


\begin{thebibliography}{xxx}
  \bibitem{BFS} Broderick, R; Fishman, L; Simmons; D, Quantitative results using variants of Schmidt's game: Dimension bounds, arithmetic progressions and more. \textit{Preprint.}
  \end{thebibliography}
 \end{document}